\def\'#1{\ifx#1i{\accent"13 \i}\else{\accent"13 #1}\fi}
\newtheorem{theorem}{Theorem}[section]
\newtheorem{corollary}[theorem]{Corollary}
\title{The Hadwiger number, chordal graphs and $ab$-perfection\thanks{Research partially supported by CONACyT-Mexico, Grants 178395, 166306;  PAPIIT-Mexico, Grant IN104915; a Postdoctoral fellowship of CONACyT-Mexico; and the National scholarship programme of the Slovak republic.}}
\author{Christian Rubio-Montiel\\ \emph{christian@matem.unam.mx} \and 
Instituto de Matem{\' a}ticas, 
\\ Universidad Nacional Aut{\' o}noma de M{\' e}xico,
\\ 04510, Mexico City, Mexico \and Department of Algebra,\\ Comenius University,
\\ 84248, Bratislava, Slovakia
}
\providecommand{\keywords}[1]{\textbf{\textit{Keywords:}} #1}
\begin{document}
\maketitle
\begin{abstract}
A graph is \emph{chordal} if every induced cycle has three vertices. The \emph{Hadwiger number} is the order of the largest complete minor of a graph. We characterize the chordal graphs in terms of the Hadwiger number and we also characterize the families of graphs such that for each induced subgraph $H$, (1) the Hadwiger number of $H$ is equal to the maximum clique order of $H$, (2) the Hadwiger number of $H$ is equal to the achromatic number of $H$, (3) the $b$-chromatic  number is equal to the pseudoachromatic number, (4) the pseudo-$b$-chromatic number is equal to the pseudoachromatic number, (5) the Hadwiger number of $H$ is equal to the Grundy number of $H$, and (6) the $b$-chromatic number is equal to the pseudo-Grundy number.
\end{abstract}
\keywords{Complete colorings, perfect graphs, forbidden graphs characterization.}

\textbf{2010 Mathematics Subject Classification:} 05C17; 05C15; 05C83.


\section{Introduction}
Let $G$ be a finite graph. A $k$-\emph{coloring} of $G$ is a surjective function $\varsigma$ that assigns a number from the set $[k]:=\{1,\dots,k\}$ to each vertex of $G$. A $k$-coloring $\varsigma$ of $G$ is called \emph{proper} if any two adjacent vertices have different colors, and $\varsigma$ is called \emph{complete} if for each pair of different colors $i,j\in [k]$ there exists an edge $xy\in E(G)$ such that $x\in \varsigma^{-1}(i)$ and $y\in\varsigma^{-1}(j)$. A $k$-coloring $\varsigma$ of a connected graph $G$ is called \emph{connected} if for all $i\in[k]$, each color class $\varsigma^{-1}(i)$ induces a connected subgraph of $G$.

The \emph{chromatic number $\chi(G)$ of $G$} is the smallest number $k$ for which there exists a proper $k$-coloring of $G$. The \emph{Hadwiger number} $h(G)$ is the maximum $k$ for which a connected and complete coloring of a connected graph $G$ exists, and it is defined as the maximum $h(H)$ among the connected components $H$ of a disconnected graph $G$ (it is also known as \emph{the connected-pseudoachromatic number}, see \cite{MR3265137}).

A graph $H$ is called a \emph{minor} of the graph $G$ if and only if $H$ can be formed from $G$ by deleting edges and vertices and by contracting edges. Suppose that $K_k$ is a minor of a connected graph $G$. If $V(K_k)=[k]$ then there exists a natural corresponding complete $k$-coloring $\varsigma\colon G \rightarrow [k]$ for which $\varsigma^{-1}(i)$ is exactly the set of vertices of $G$ which contract to vertex $i$ in $K_k$. The \emph{Hadwiger number} $h(G)$ of a graph $G$ is the largest $k$ for which $K_k$ is a minor of $G$. Clearly, 

\begin{equation} \label{des1}
\omega(G)\leq h(G)
\end{equation}

where $\omega(G)$ denotes the \emph{clique number} of $G$: the maximum clique order of $G$.

The Hadwiger number was introduced by Hadwiger in 1943 \cite{hadwiger1958ungeloste} together the \emph{Hadwiger conjecture} which states that $\chi(G)\leq h(G)$ for any graph $G$.

The following definition is an extension of the notion of \emph{perfect graph}, introduced by Berge \cite{B}: Let $a,b$ be two distinct parameters of $G$. A graph $ G $ is called \emph{$ab$-perfect} if for every induced subgraph $ H $ of $ G $, $ a(H)=b(H) $. Note that, with this definition a perfect graph is denoted by $ \omega \chi $-perfect. The concept of the $ ab $-perfect graphs was introduced by Christen and Selkow in \cite{MR539075} and extended in \cite{AraujoPardo2013163,AR,MR2954335,MR1801077,montiel2015new,MR1846917}.

A graph $G$ without an induced subgraph $H$ is called $H$-\emph{free}. A graph $H_1$-free, $H_2$-free,... is called \emph{$(H_1,H_2,\dots)$-free}. A \emph{chordal graph} is a $(C_4,C_5,\dots)$-free one.

Some known results are the following: L{\' o}vasz proved in \cite{MR0302480} that a graph $G$ is $\omega\chi$-perfect if and only if its complement is $\omega\chi$-perfect. Chudnovsky, Robertson, Seymour and Thomas proved in \cite{MR2233847} that a graph $ G $ is $ \omega \chi $-perfect if and only if $ G $ and its complement are $(C_5,C_7,\dots)$-free.

This paper is organized as follows: In Section \ref{2} we prove that the families of chordal graphs and the family of $\omega h$-perfect graphs are the same. In Section \ref{3}, we give some consequences of the Section \ref{2} as characterizations of other graph families related to complete colorings.


\section{Chordal graphs and $\omega h$-perfect graphs}\label{2}
We will use the following chordal graph characterization to prove Theorem \ref{teo1}:

\begin{theorem} [Hajnal, Sur{\' a}nyi \cite{MR0103161} and Dirac \cite{MR0130190}] \label{teo0}
A graph $G$ is chordal if and only if $G$ can be obtained by identifying two complete subgraphs of the same order in two chordal graphs.
\end{theorem}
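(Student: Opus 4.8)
The plan is to prove both implications of the biconditional, treating the ``if'' direction (clique sums preserve chordality) by direct cycle-chasing and the ``only if'' direction (every chordal graph decomposes) by locating a clique separator, which I expect to be the substantive part. Throughout I use the definition given in the introduction, namely that $G$ is chordal exactly when it has no induced cycle on four or more vertices.

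For sufficiency, suppose $G$ is obtained by identifying a common complete subgraph $K$ of order $t$ in two chordal graphs $G_1$ and $G_2$, so that $V(G)=V(G_1)\cup V(G_2)$, $V(G_1)\cap V(G_2)=V(K)$, and there are no edges between $V(G_1)\setminus V(K)$ and $V(G_2)\setminus V(K)$. I would take an arbitrary induced cycle $C$ of $G$ and show $|V(C)|=3$. If $C$ lies entirely in $G_1$ or entirely in $G_2$, chordality of the corresponding $G_i$ forces $|V(C)|=3$ directly. Otherwise $C$ meets both $V(G_1)\setminus V(K)$ and $V(G_2)\setminus V(K)$; since every path between these sets passes through $K$, the cycle $C$ must contain two distinct vertices $u,v\in V(K)$ lying on different arcs of $C$, one arc containing a vertex of $G_1\setminus K$ and the other a vertex of $G_2\setminus K$, so $u$ and $v$ are non-consecutive on $C$. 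Because $K$ is complete, $uv\in E(G)$ is a chord, contradicting that $C$ is induced. Hence $G$ has no induced cycle on four or more vertices and is chordal.

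For necessity, let $G$ be chordal. If $G$ is complete the decomposition is trivial, identifying $G$ with itself along $K=G$, so assume $G$ is not complete, equivalently it has two non-adjacent vertices $a,b$. I would choose a minimal $a$--$b$ separator $S$ and let $C_a$ be the component of $G-S$ containing $a$. The key claim is that $S$ is a clique. To prove it, fix $x,y\in S$; by minimality of $S$ each of $x,y$ has a neighbour in $C_a$ and in a second component $C_b$ containing $b$, so there is a shortest $x$--$y$ path $P_a$ with interior in $C_a$ and a shortest such path $P_b$ with interior in $C_b$. The concatenation $P_a\cup P_b$ is a cycle; since the interiors lie in different components there are no edges between them, and minimality of the paths rules out chords inside each $P_i$, so the only possible chord is $xy$ itself. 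Chordality then forces $xy\in E(G)$, proving $S$ is complete. Setting $G_1=G[V(C_a)\cup S]$ and $G_2=G[V(G)\setminus V(C_a)]$ exhibits $G$ as the identification of the two chordal induced subgraphs $G_1,G_2$ along the complete subgraph $S$, both proper subgraphs because $C_a$ and $C_b$ are non-empty.

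The main obstacle is the clique-separator claim in the necessity direction: one must choose the paths $P_a,P_b$ to be genuinely induced and of length at least two, so that $P_a\cup P_b$ is a chordless cycle on at least four vertices whose unique admissible chord is $xy$, and one must guarantee that two distinct full components $C_a,C_b$ exist so that the interiors sit in different components. The sufficiency direction is comparatively routine, the only delicate point being the verification that the two separator vertices picked up by $C$ are non-consecutive on the cycle.
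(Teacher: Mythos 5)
The paper itself gives no proof of this statement: it is quoted from Hajnal--Sur{\'a}nyi and Dirac and used as a black box in the proof of Theorem \ref{teo1}, so the comparison here is with the classical argument from those sources, which your proposal reconstructs correctly. Both halves are sound: in the ``if'' direction, a crossing induced cycle must pick up two separator vertices interior to different arcs (so non-consecutive, since every cycle edge lies inside one arc), and completeness of $K$ then yields a forbidden chord; in the ``only if'' direction you prove Dirac's lemma that a minimal $a$--$b$ separator $S$ is a clique, correctly using minimality to get a neighbour of each $x,y\in S$ in both full components $C_a$ and $C_b$, so that the two shortest paths with interiors in distinct components concatenate to a cycle of length at least four whose only admissible chord is $xy$, and chordality forces $xy\in E(G)$. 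The one degenerate case you gloss over is a disconnected non-complete $G$, where the minimal separator is empty and the ``identification'' degenerates to a disjoint union along complete subgraphs of order zero; this is harmless for the paper's purposes, since Theorem \ref{teo1} only invokes the decomposition for connected chordal graphs.
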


Now, we characterize the chordal graphs and the $\omega h$-perfect ones. The following proof is based on the standard proof of the chordal graph perfection (see \cite{MR2450569}).

\begin{theorem} \label{teo1}
A graph $G$ is $\omega h$-perfect if and only if $G$ is chordal.
\begin{proof}
Assume that $G$ is $\omega h$-perfect. Note that if a cycle $H$ is one of four or more vertices then $\omega(H)=2$ and $h(H)=3 $. Hence, every induced cycle of $G$ has at the most $3$ vertices and the implication is true.

Now, we verify the converse. Since every induced subgraph of a chordal graph is also a chordal graph, it suffices to show that if $G$ is a connected chordal graph, then $\omega(G) = h(G)$. We proceed by induction on the order $n$ of $G$. If $n=1$, then $G=K_1$ and $\omega(G)=h(G)=1$. 
Assume, therefore, that $\omega(H)=h(H)$ for every induced chordal graph $H$ of order less than $n$ for $n\geq 2$ and let $G$ be a chordal graph of order $n$. If $G$ is a complete graph, then $\omega(G)=h(G)=n$. 
Hence, we may assume that $G$ is not complete. By Theorem \ref{teo0}, $G$ can be obtained from two chordal graphs $H_1$ and $H_2$ by identifying two complete subgraphs of the same order in $H_1$ and $H_2$. Let $S$ denote the set of vertices in $G$ that belong to $H_1$ and $H_2$. Thus the induced subgraph $\left\langle S\right\rangle_G $ in $G$ by $S$ is complete and no vertex in $V(H_1)\setminus S$ is adjacent to a vertex in $V(H_2)\setminus S$. Hence,
\[\omega(G)=\max\{\omega(H_1),\omega(H_2)\}=k.\]
Moreover, according to the induction hypothesis, $\omega(H_1)=h(H_1) $ and $\omega(H_2)=h(H_2)$, then 
\[\max\{\omega(H_1),\omega(H_2)\}=\max\{h(H_1),h(H_2)\}=k.\]
On the other hand, since $S$ is a clique cut then each walk between $V(H_1)\setminus S$ and $V(H_2)\setminus S$ contains at least one vertex in $S$. Let $\varsigma$ be a pseudo-connected $h(G)$-coloring of $G$, and suppose there exist two color classes such that one is completely contained in $V(H_1)\setminus S$, and the other one is completely contained in $V(H_2)\setminus S$. Clearly these two color classes do not intersect, which contradicts our choice of $\varsigma$. Moreover, each color class with vertices both in $V(H_1)\setminus S$ and in $V(H_2)\setminus S$, contains vertices in $S$. Consequently, every pair of color classes having vertices both in $V(H_1)\setminus S$ and in $V(H_2)\setminus S$ must have an incidence in $\left\langle S\right\rangle_G$. Thus,
\[h(G)\leq \max\{h(H_1),h(H_2)\}=k.\]
By Equation \ref{des1}, $\omega(G)=k=h(G)$ and the result follows.
\end{proof}
\end{theorem}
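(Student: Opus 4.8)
The plan is to prove the two implications separately: the easy direction by contraposition through induced long cycles, and the substantive direction by induction using the clique-cutset decomposition of Theorem \ref{teo0}.

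For the forward direction I would establish the contrapositive: if $G$ is not chordal, then $G$ is not $\omega h$-perfect. A non-chordal $G$ contains an induced cycle $C_m$ with $m \ge 4$, and I would take $H = C_m$ as the witnessing induced subgraph. On one hand $\omega(C_m) = 2$, since a cycle of length at least $4$ has no triangle; on the other hand $h(C_m) = 3$, because contracting a spanning path of the cycle down to three vertices exhibits $K_3$ as a minor, while no $K_4$ minor can occur since a cycle is outerplanar (equivalently series-parallel) and such graphs contain no $K_4$ minor. Thus $\omega(H) = 2 \ne 3 = h(H)$, so $G$ is not $\omega h$-perfect.

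For the converse, since every induced subgraph of a chordal graph is again chordal, it suffices to prove $\omega(G) = h(G)$ for a connected chordal $G$. Inequality \eqref{des1} already yields $\omega(G) \le h(G)$, so the real content is the reverse bound $h(G) \le \omega(G)$, which I would prove by induction on $n = |V(G)|$. The cases $n = 1$ and $G$ complete are immediate, since then $\omega(G) = h(G) = n$. Otherwise $G$ is not complete, so by Theorem \ref{teo0} it decomposes as two chordal graphs $H_1, H_2$ glued along a common complete subgraph $S$; here $\langle S\rangle_G$ is a clique, $S$ separates $V(H_1)\setminus S$ from $V(H_2)\setminus S$, and $\omega(G) = \max\{\omega(H_1), \omega(H_2)\}$. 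Applying the induction hypothesis to $H_1$ and $H_2$ then reduces the whole problem to establishing $h(G) \le \max\{h(H_1), h(H_2)\}$.

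The main obstacle is exactly this last inequality, which I would attack through the coloring description of the Hadwiger number: fix a connected complete coloring of $G$ attaining $h(G)$ colors, equivalently a $K_{h(G)}$-minor model whose branch sets are the color classes. The key structural observations are (i) no color class can lie entirely inside $V(H_1)\setminus S$ while another lies entirely inside $V(H_2)\setminus S$, since completeness would force an edge between them across the clique separator $S$, and none exists; hence, after relabeling, every color class meets $V(H_1)$; and (ii) any class straddling both sides must contain a vertex of $S$, and, crucially, because $S$ is a clique, restricting such a class to $V(H_1)$ keeps it connected, as every detour through $V(H_2)\setminus S$ enters and leaves via two vertices of $S$ that already belong to the class and are joined by an edge of $\langle S\rangle_G$, so the detour can be shortcut inside $H_1$. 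The delicate point is then to verify that the restricted coloring of $H_1$ remains complete: for any two colors, the $G$-edge realizing their adjacency can be relocated into $H_1$ by replacing an endpoint sitting in $V(H_2)\setminus S$ with a same-colored vertex of $S$, again using that $S$ is a clique. Granting these facts, the restriction is a connected complete coloring of $H_1$ using all $h(G)$ colors, so $h(G) \le h(H_1) \le \max\{h(H_1), h(H_2)\}$, and the induction closes by combining this with the induction hypothesis and \eqref{des1}.
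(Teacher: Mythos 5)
Your proposal is correct and follows essentially the same route as the paper: the forward direction via induced cycles $C_m$ with $\omega(C_m)=2$ and $h(C_m)=3$, and the converse by induction on $|V(G)|$ using the clique-cutset decomposition of Theorem \ref{teo0} together with the same observations about color classes of a connected complete coloring relative to the separator $S$. If anything, your argument is more complete than the paper's, since you explicitly verify that restricting the coloring to $H_1$ preserves connectivity (shortcutting detours through $V(H_2)\setminus S$ across the clique $S$) and completeness (relocating witnessing edges into $\left\langle S\right\rangle_G$), steps the paper leaves implicit when it asserts $h(G)\leq\max\{h(H_1),h(H_2)\}$.
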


It is known that every chordal graph is a $\omega\chi$-perfect one (see \cite{MR2450569}). The following corollary is a consequence of the chordal graph perfection.

\begin{corollary}
Every $\omega h$-perfect graph is $\omega\chi$-perfect.
\end{corollary}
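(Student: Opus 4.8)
The plan is to obtain this at once from Theorem~\ref{teo1} combined with the classical fact, quoted just before the corollary, that every chordal graph is $\omega\chi$-perfect. First I would take an arbitrary $\omega h$-perfect graph $G$. By the characterization in Theorem~\ref{teo1}, being $\omega h$-perfect is equivalent to being chordal, so $G$ is chordal. Invoking the known result (see \cite{MR2450569}), every chordal graph satisfies $\omega(H)=\chi(H)$ for each of its induced subgraphs $H$, which is precisely the assertion that $G$ is $\omega\chi$-perfect. The whole argument is thus the implication chain $\omega h\text{-perfect}\Rightarrow\text{chordal}\Rightarrow\omega\chi\text{-perfect}$.

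Because both links of this chain are already in hand, one as the theorem just proved and the other as a cited classical result, I do not expect any real obstacle. The single point deserving a moment of care is that both flavors of $ab$-perfection are quantified over the same family, namely all induced subgraphs; since chordality is itself closed under taking induced subgraphs, the transfer from chordality to $\omega\chi$-perfection is immediate and no separate hereditary argument is needed.

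As an aside, one might be tempted to argue directly without citing the chordal-perfection theorem: from $\omega h$-perfection one has $\chi(H)\le h(H)=\omega(H)$ for every induced $H$, and combined with the universal bound $\omega(H)\le\chi(H)$ this would force $\chi(H)=\omega(H)$. That route, however, requires $\chi(H)\le h(H)$, i.e.\ the Hadwiger conjecture (admittedly known in the chordal case), so passing through Theorem~\ref{teo1} and the established result is the cleaner and more self-contained path.
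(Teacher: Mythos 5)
Your proof is correct and matches the paper's intended argument exactly: the corollary is stated as a consequence of Theorem~\ref{teo1} together with the cited fact that chordal graphs are $\omega\chi$-perfect, which is precisely your implication chain. Your aside correctly notes why the direct route via $\chi(H)\leq h(H)$ is less self-contained.
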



\section{Other classes of $ab$-perfect graphs}\label{3}
In this section, we give a new characterization of several family of $ab$-perfect graphs related to complete colorings. 


\subsection{Achromatic and pseudoachromatic numbers}

Firstly, the \emph{pseudoachromatic number} $\psi(G)$ of $G$ is the largest number $k$ for which there exists a complete $k$-coloring of $G$ \cite{MR0256930}, and it is easy to see that

\begin{equation} \label{des2}
\omega(G)\leq h(G) \leq \psi(G).
\end{equation}

Secondly, the \emph{achromatic number} $\alpha(G)$ of $G$ is the largest number $k$ for which there exists a proper and complete $k$-coloring of $G$ \cite{MR0272662}, and it is not hard to see that

\begin{equation} \label{des3}
\omega(G)\leq \alpha(G) \leq \psi(G).
\end{equation}

Complete bipartite graphs have achromatic number two (see \cite{MR2450569}) but their Hadwiger number can be arbitrarily large, while the graph formed by the union of $K_2$ has Hadwiger number two but its achromatic number can be arbitrarily large. Therefore, $\alpha$ and $h$ are two non comparable parameters.
We will use the following characterization in the proof of Corollary \ref{cor1}.

\begin{theorem} [Araujo-Pardo, R-M \cite{AraujoPardo2013163,AR}] \label{teo2}
A graph $G$ is $\omega\psi$-perfect if and only if $G$ is $(C_4,P_4,P_3 \cup K_2,3K_2) $-free.
\end{theorem}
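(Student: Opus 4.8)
My plan is to prove both implications, handling the easy necessity direction by a direct computation on the four forbidden graphs and the harder sufficiency direction by a structural description of the class $\mathcal{F}=(C_4,P_4,P_3\cup K_2,3K_2)$-free graphs together with induction on the order. For necessity, I would note that each of $C_4$, $P_4$, $P_3\cup K_2$ and $3K_2$ is triangle-free with at least one edge, so each has clique number $2$, whereas each admits a complete $3$-coloring: on $3K_2$ one colors the three edges with the pairs $\{1,2\},\{2,3\},\{1,3\}$, and the other three graphs are handled the same way by spreading the three pairs of colors over available edges. Thus each forbidden graph $F$ satisfies $\psi(F)\ge 3>2=\omega(F)$, so any graph containing such an $F$ as an induced subgraph fails $\omega\psi$-perfection on that subgraph; hence every $\omega\psi$-perfect graph lies in $\mathcal{F}$.

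For sufficiency I would first establish a structure theorem for $\mathcal{F}$. A connected $(C_4,P_4)$-free graph is a connected trivially perfect graph and therefore possesses a universal vertex $u$, so a connected member of $\mathcal{F}$ has the form $K_1+H$; moreover $K_1+H\in\mathcal{F}$ if and only if $H\in\mathcal{F}$, because none of the four forbidden graphs has a universal vertex, so the universal $u$ cannot belong to any induced copy and such a copy must already live in $H$. A disconnected member of $\mathcal{F}$ is, by $3K_2$- and $(P_3\cup K_2)$-freeness, either a disjoint union of cliques of which at most two are nontrivial, or a single non-complete connected component together with isolated vertices (since a non-complete connected component contains an induced $P_3$, which forbids an edge in any other component). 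With this trichotomy I would induct on $|V(G)|$ to prove $\psi(G)=\omega(G)$, using three auxiliary facts: isolated vertices do not raise $\psi$ (a color used only on isolated vertices witnesses no pair once there are at least two colors); $\psi(K_a\cup K_b)=\max\{a,b\}$, because a color confined to $K_a$ and one confined to $K_b$ can never be joined by an edge, forcing the color sets of the two cliques to be nested; and, crucially, $\psi(K_1+H)=\psi(H)+1$.

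The heart of the argument, and the step I expect to be the main obstacle, is the universal-vertex lemma $\psi(K_1+H)\le\psi(H)+1$. I would fix an optimal complete coloring $\varsigma$ of $K_1+H$ using $k=\psi(K_1+H)$ colors and set $c_0=\varsigma(u)$. Every edge incident to $u$ carries the color $c_0$, so a pair of colors drawn from the remaining $k-1$ colors can only be witnessed by an edge of $H$; recoloring the $c_0$-colored vertices of $H$ arbitrarily among the surviving colors then produces a complete coloring of $H$ on $k-1$ colors, whence $k-1\le\psi(H)$. Combined with the trivial reverse bound (give $u$ a fresh color on top of an optimal coloring of $H$), this yields equality. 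Feeding this into the induction closes all three structural cases — $\psi(K_1+H)=\psi(H)+1=\omega(H)+1=\omega(K_1+H)$ in the connected case, $\psi=\max\{a,b\}=\omega$ for a disjoint union of cliques, and $\psi(C\cup tK_1)=\psi(C)=\omega(C)=\omega$ in the last case — so that $\psi(G)=\omega(G)$ for every $G\in\mathcal{F}$, with Inequality \ref{des2} guaranteeing the easy direction throughout.

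I expect the bookkeeping in the disconnected cases to be routine once the universal-vertex lemma is in hand, so the genuine difficulty is concentrated in that lemma and in verifying that the stated trichotomy captures $\mathcal{F}$ exactly; the latter amounts to checking that the freeness conditions really do force the clique/near-clique shape claimed, which I would confirm by the induced-$P_3$ argument in a non-complete component and the counting of nontrivial components forced by $3K_2$-freeness.
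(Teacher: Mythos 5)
This statement is one the paper does not prove at all: it is imported verbatim from the cited works of Araujo-Pardo and Rubio-Montiel \cite{AraujoPardo2013163,AR}, so there is no internal argument to compare yours against, and your proposal must be judged as an independent derivation. On that footing it is correct and essentially complete. The necessity direction is right: all four graphs are triangle-free with at least one edge, so each has clique number $2$, and each of your colorings is a genuine complete $3$-coloring. For sufficiency, your trichotomy of the class is accurate: a connected $(C_4,P_4)$-free graph is trivially perfect and has a universal vertex (this is Wolk's theorem, the one external ingredient you should cite or reprove, e.g.\ by checking that a maximum-degree vertex of a connected $(C_4,P_4)$-free graph is universal); none of the four forbidden graphs has a universal vertex, so induced copies in $K_1+H$ must live in $H$; and in a disconnected member a non-complete component contains an induced $P_3$, forcing every other component to be edgeless, while $3K_2$-freeness caps a union of cliques at two nontrivial ones. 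The load-bearing lemma $\psi(K_1+H)=\psi(H)+1$ is proved soundly: every color other than $c_0=\varsigma(u)$ appears in $H$, every pair of such colors is witnessed by an edge of $H$ both of whose endpoints keep their colors under your recoloring of the $c_0$-vertices, so a complete $(k-1)$-coloring of $H$ results; combined with the nestedness argument for $\psi(K_a\cup K_b)=\max\{a,b\}$, the isolated-vertex observation, and $\omega(K_1+H)=\omega(H)+1$, the induction closes in all three cases. Only trivial bookkeeping remains (the degenerate cases $k=1$ and $V(H)=\emptyset$). What your route buys beyond the bare equivalence is a sharp structural description of the class together with exact values of $\psi$ on it, which is very much in the spirit of the original sources; within the present paper your argument would also make Corollaries \ref{cor1} and \ref{cor3} self-contained rather than dependent on Theorem \ref{teo2} as a black box.
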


Corollary \ref{cor1} is an interesting result because it gives a characterization of two non comparable parameters.

\begin{corollary}\label{cor1}
A graph $G$ is $\alpha h$-perfect if and only if $G$ is $\omega\psi$-perfect.
\begin{proof}
Since $h(C_4)=\alpha(P_4)=\alpha(P_3\cup K_2)=\alpha(3K_2)=3$ and $\alpha(C_4)=h(P_4)=h(P_3\cup K_2)=h(3K_2)=2$ (see Figure \ref{fig1}) then a $\alpha h$-perfect graph is $(C_4,P_4,P_3\cup K_2,3K_2)$-free. By Theorem \ref{teo2}, $G$ is $\omega\psi$-perfect.

For the converse, if $G$ is $\omega\psi$-perfect, then by Equation \ref{des2}, $G$ is a $\omega h$-perfect graph, thus, the implication follows.
\end{proof}
\end{corollary}

\begin{corollary}\label{cor4}
Every $\omega\psi$-perfect graph is $\omega\chi$-perfect.
\end{corollary}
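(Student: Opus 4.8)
The plan is to exploit the inequality chain of Equation \ref{des2}, namely $\omega(G)\leq h(G)\leq \psi(G)$, to show that $\omega\psi$-perfection is a formally stronger condition than $\omega h$-perfection, and then to invoke the already-established implication that every $\omega h$-perfect graph is $\omega\chi$-perfect.

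First I would take an arbitrary $\omega\psi$-perfect graph $G$ together with an arbitrary induced subgraph $H$ of $G$. By the definition of $\omega\psi$-perfection we have $\omega(H)=\psi(H)$. Applying Equation \ref{des2} to $H$ gives $\omega(H)\leq h(H)\leq \psi(H)$; since the two outer terms coincide, the middle term is squeezed to their common value, yielding $\omega(H)=h(H)$. Because $H$ was an arbitrary induced subgraph, this shows that $G$ is $\omega h$-perfect. This is precisely the reasoning already used in the proof of Corollary \ref{cor1}.

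Next I would apply the (unlabelled) corollary stated immediately after Theorem \ref{teo1}, which asserts that every $\omega h$-perfect graph is $\omega\chi$-perfect; equivalently, by Theorem \ref{teo1} every chordal graph is $\omega\chi$-perfect. Chaining the two implications, $\omega\psi$-perfect $\Rightarrow$ $\omega h$-perfect $\Rightarrow$ $\omega\chi$-perfect, gives the desired conclusion.

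I do not expect any substantive obstacle: the entire argument rests on the observation that $\omega\leq h\leq\psi$ collapses the equality $\omega=\psi$ into the equalities $\omega=h=\psi$. The only point requiring care is that the squeezing must be performed for \emph{every} induced subgraph $H$, not merely for $G$ itself, so that the hypothesis of the $\omega h$-to-$\omega\chi$ corollary is genuinely met.
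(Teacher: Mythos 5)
Your proof is correct and follows essentially the same route as the paper: both use Equation (\ref{des2}) to squeeze $\omega(H)=h(H)$ for every induced subgraph (establishing $\omega h$-perfection) and then pass to $\omega\chi$-perfection via Theorem \ref{teo1} and chordal graph perfection, which is exactly the content of the unlabelled corollary you invoke. Your explicit remark that the squeeze must hold for every induced subgraph, not just $G$, is a point the paper leaves implicit but is the same argument.
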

\begin{proof}
If a graph $G$ is $\omega\psi$-perfect then Equation (\ref{des2}) implies that $G$ is $\omega h$-perfect, and by Theorem \ref{teo1} $G$ is chordal, therefore $G$ is $\omega\chi$-perfect.
\end{proof}

The following corollary is a consequence of the perfection of $\omega \psi$-perfect graphs.

\begin{corollary}
Every $\alpha h$-perfect graph is $\omega\chi$-perfect.
\end{corollary}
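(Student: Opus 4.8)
The plan is to obtain the conclusion by composing the two immediately preceding corollaries, so no new combinatorial work is required. The hypothesis is that $G$ is $\alpha h$-perfect, and the target is that $G$ is $\omega\chi$-perfect; the bridge between them is the class of $\omega\psi$-perfect graphs. Concretely, I would first pass from $\alpha h$-perfection to $\omega\psi$-perfection via Corollary \ref{cor1}, and then from $\omega\psi$-perfection to $\omega\chi$-perfection via Corollary \ref{cor4}.

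In detail, the first step invokes Corollary \ref{cor1}, which asserts that the family of $\alpha h$-perfect graphs and the family of $\omega\psi$-perfect graphs coincide. Since that is a genuine two-way equivalence, the assumption that $G$ is $\alpha h$-perfect yields at once that $G$ is $\omega\psi$-perfect. The second step then applies Corollary \ref{cor4}, which says precisely that every $\omega\psi$-perfect graph is $\omega\chi$-perfect; feeding in the $\omega\psi$-perfect graph $G$ produced by the first step delivers the desired conclusion directly.

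I do not expect any real obstacle here: the statement is a formal consequence of results already established, and the only point worth checking is that the equivalence in Corollary \ref{cor1} really is bidirectional (so that one may enter the chain from the $\alpha h$ side), which it is. If one preferred to avoid citing Corollary \ref{cor1} as a black box, an equivalent direct route would unwind the same ideas: an $\alpha h$-perfect graph is $(C_4,P_4,P_3\cup K_2,3K_2)$-free by the parameter values computed in the proof of Corollary \ref{cor1}, hence $\omega\psi$-perfect by Theorem \ref{teo2}, hence $\omega h$-perfect by Equation \ref{des2}, hence chordal by Theorem \ref{teo1}, and therefore $\omega\chi$-perfect. Either phrasing closes the argument in one or two lines.
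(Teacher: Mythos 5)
Your proof is correct and coincides with the paper's intended argument: the paper states this corollary as an immediate consequence of Corollary \ref{cor1} (the equivalence of $\alpha h$-perfection and $\omega\psi$-perfection) combined with Corollary \ref{cor4} ($\omega\psi$-perfect implies $\omega\chi$-perfect), which is exactly your composition. Nothing further is needed.
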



\subsection{$b$-chromatic and pseudo-$b$-chromatic numbers}

On one hand, a coloring such that every color class contains a vertex that has a neighbor in every other color class is called \emph{dominating}. The \emph{pseudo-$b$-chromatic number} $B(G)$ of a graph $G$ is the largest integer $k$ such that $G$ admits a dominating $k$-coloring.

On the other hand, the \emph{$b$-chromatic number} $b(G)$ of $G$ is the largest number $k$ for which there exists a proper and dominating $k$-coloring of $G$ \cite{MR2954335}, therefore

\begin{equation} \label{des5}
\omega(G)\leq b(G) \leq B(G) \leq \psi(G).
\end{equation}

We get the following characterizations:

\begin{corollary}\label{cor3}
For any graph $ G $ the following are equivalent: $\left\langle 1 \right\rangle $ $G$ is $\omega\psi$-perfect, $\left\langle 2 \right\rangle $ $G$ is $b\psi$-perfect, $\left\langle 3 \right\rangle $ $G$ is $B\psi$-perfect and $\left\langle 4 \right\rangle $ $G$ is $(C_4,P_4,P_3 \cup K_2,3K_2)$-free.
\end{corollary}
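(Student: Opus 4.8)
The plan is to prove the four-way equivalence in Corollary \ref{cor3} by exploiting the chain of inequalities in Equation (\ref{des5}) together with the already-established characterization in Theorem \ref{teo2}. The cleanest strategy is to show $\langle 4\rangle \Rightarrow \langle 1\rangle$, $\langle 1\rangle \Rightarrow \langle 2\rangle$, $\langle 2\rangle \Rightarrow \langle 3\rangle$, and $\langle 3\rangle \Rightarrow \langle 4\rangle$, so that a single cycle of implications closes the loop. The equivalence $\langle 1\rangle \Leftrightarrow \langle 4\rangle$ is literally Theorem \ref{teo2}, so that edge of the cycle is free, and I would simply cite it.

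For the forward implications $\langle 1\rangle \Rightarrow \langle 2\rangle$ and $\langle 2\rangle \Rightarrow \langle 3\rangle$, the idea is a squeezing argument. Equation (\ref{des5}) gives $\omega(H)\leq b(H)\leq B(H)\leq \psi(H)$ for every induced subgraph $H$. If $G$ is $\omega\psi$-perfect, then $\omega(H)=\psi(H)$ for all induced $H$, which forces every term in the chain to collapse to a common value; in particular $b(H)=\psi(H)$, so $G$ is $b\psi$-perfect, and likewise $B(H)=\psi(H)$, giving $B\psi$-perfection directly. So in fact $\langle 1\rangle$ already implies both $\langle 2\rangle$ and $\langle 3\rangle$ by the sandwich, and the only remaining work is to return to $\langle 1\rangle$ or $\langle 4\rangle$.

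For the closing implication I would argue $\langle 2\rangle \Rightarrow \langle 4\rangle$ (and separately $\langle 3\rangle \Rightarrow \langle 4\rangle$, or route both through the forbidden subgraphs) by computing the relevant parameters on the four obstruction graphs $C_4, P_4, P_3\cup K_2, 3K_2$. The plan is to verify, as in the proof of Corollary \ref{cor1}, that on each of these graphs $H$ the value of $b(H)$ strictly differs from $\psi(H)$ (and that $B(H)$ differs from $\psi(H)$), so that no $b\psi$-perfect or $B\psi$-perfect graph can contain any of them as an induced subgraph; this yields $(C_4,P_4,P_3\cup K_2,3K_2)$-freeness, which is $\langle 4\rangle$. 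Concretely I expect $\psi$ to take the value $3$ on each obstruction while $b$ and $B$ take the smaller value $2$, mirroring the $h$/$\alpha$ computations already used for Corollary \ref{cor1}.

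The main obstacle is the last step: the small-case parameter computations for $b$ and $B$ on the four forbidden graphs must be checked carefully, since these parameters behave less transparently on disconnected graphs such as $P_3\cup K_2$ and $3K_2$, where the dominating-coloring condition interacts with the absence of edges between components. I would treat each of the four graphs explicitly and confirm that $b$ and $B$ fall strictly below $\psi$, taking care that a dominating coloring of a disconnected graph still requires each color class to have a representative adjacent to all others, which restricts the number of usable colors. Once these values are pinned down, the cycle of implications closes and the equivalence of $\langle 1\rangle$ through $\langle 4\rangle$ follows.
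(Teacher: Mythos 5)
Your proposal is correct and follows essentially the same route as the paper: the implications $\left\langle 1 \right\rangle \Rightarrow \left\langle 2 \right\rangle \Rightarrow \left\langle 3 \right\rangle$ via the sandwich $\omega(H)\leq b(H)\leq B(H)\leq \psi(H)$ of Equation (\ref{des5}), the closing implication $\left\langle 3 \right\rangle \Rightarrow \left\langle 4 \right\rangle$ by checking that $B(H)\neq\psi(H)$ on each of $C_4$, $P_4$, $P_3\cup K_2$, $3K_2$, and $\left\langle 4 \right\rangle \Rightarrow \left\langle 1 \right\rangle$ by Theorem \ref{teo2}. Your anticipated values ($\psi=3$ and $b=B=2$ on all four obstructions) are indeed correct, including for the disconnected cases you flagged.
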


\begin{proof}
The proofs of $\left\langle 1 \right\rangle \Rightarrow \left\langle 2 \right\rangle$ and $\left\langle 2 \right\rangle \Rightarrow \left\langle 3 \right\rangle$ immediately follow from (\ref{des5}).
To prove $\left\langle 3 \right\rangle \Rightarrow \left\langle 4 \right\rangle$ note that, if $ H \in \{ C_4, P_4, P_3 \cup K_2, 3K_2 \} $ then $ B(H)\not = \psi (H)$, hence the implication is true, see Figure \ref{fig1}.
The proof of $\left\langle 4 \right\rangle \Rightarrow \left\langle 1 \right\rangle$ is a consequence of Theorem \ref{teo2}.
\end{proof}

The following corollary is a consequence of Corollaries \ref{cor4} and \ref{cor3}.

\begin{corollary}
The $b\psi$-perfect graphs and the $B\psi$-perfect ones are $\omega\chi$-perfect.
\end{corollary}

Corollary \ref{cor3} is related to the following theorem:

\begin{theorem}[Christen, Selkow \cite{MR539075} and Blidia, Ikhlef, Maffray \cite{MR2954335}]
For any graph $ G $ the following are equivalent: $\left\langle 1 \right\rangle $ $G$ is $\omega\alpha$-perfect, $\left\langle 2 \right\rangle $ $G$ is $b\alpha$-perfect and $\left\langle 3 \right\rangle $ $G$ is $(P_4,P_3 \cup K_2,3K_2)$-free.
\end{theorem}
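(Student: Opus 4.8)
The plan is to establish the cycle of implications $\langle 1 \rangle \Rightarrow \langle 2 \rangle \Rightarrow \langle 3 \rangle \Rightarrow \langle 1 \rangle$, mirroring the proof of Corollary \ref{cor3}. The two easy implications rest on inequalities. First I would observe that every dominating coloring is complete (a vertex witnessing domination of its class yields an edge to every other class), so every proper dominating coloring is a proper complete coloring, whence $b(G) \le \alpha(G)$; combined with $\omega(G) \le b(G)$ from (\ref{des5}) this gives the chain $\omega(G) \le b(G) \le \alpha(G)$ on every induced subgraph. Consequently, if $G$ is $\omega\alpha$-perfect then $\omega(H) = \alpha(H)$ forces $b(H) = \alpha(H)$ for every induced $H$, which is $\langle 1 \rangle \Rightarrow \langle 2 \rangle$. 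For $\langle 2 \rangle \Rightarrow \langle 3 \rangle$ I would compute the two parameters on each forbidden graph: in each of $P_4$, $P_3 \cup K_2$ and $3K_2$ one checks that $\alpha = 3$ while $b = 2$ (the latter because a proper dominating $3$-coloring needs three vertices of degree at least two, which none of these graphs possesses), so $b \ne \alpha$ and a $b\alpha$-perfect graph must avoid all three as induced subgraphs.

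The substance lies in $\langle 3 \rangle \Rightarrow \langle 1 \rangle$. Since the class of $(P_4, P_3 \cup K_2, 3K_2)$-free graphs is hereditary, it suffices to prove $\alpha(G) = \omega(G)$ for a single such graph $G$; as $\omega(G) \le \alpha(G)$ always holds, I only need the reverse inequality. I would argue by induction on $|V(G)|$, using that a $P_4$-free graph (a cograph) on at least two vertices is either disconnected or the join of two smaller cographs. In the join case $G = G_1 + G_2$, no color class of a proper coloring can meet both sides, so any proper complete coloring splits into proper complete colorings of $G_1$ and $G_2$ (cross-color pairs being complete for free), and conversely; hence $\alpha(G) = \alpha(G_1) + \alpha(G_2)$, which by induction equals $\omega(G_1) + \omega(G_2) = \omega(G)$.

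The disconnected case is where the forbidden subgraphs do the work. If three components carried an edge we would find an induced $3K_2$, so at most two components have edges; and if one component contained an induced $P_3$ while another carried an edge we would find an induced $P_3 \cup K_2$. This leaves three configurations: an edgeless graph (where $\alpha = \omega = 1$); a single nontrivial component together with isolated vertices; and two complete components $K_p$, $K_q$ together with isolated vertices. In the first nontrivial configuration the isolated vertices can only be absorbed into existing classes (a class consisting of isolated vertices alone cannot be complete with any other class), so $\alpha(G)$ equals $\alpha$ of the nontrivial component and induction finishes it. In the two-clique configuration the key observation is that a color appearing only on $K_p$ and a color appearing only on $K_q$ can never share an edge, so the two color sets must be nested; this, together with the fact that isolated vertices are again only absorbable, caps the number of colors at $\max\{p,q\} = \omega(G)$, again yielding $\alpha(G) = \omega(G)$.

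The main obstacle is precisely this disconnected case, since the achromatic number of a disjoint union is not in general the maximum of the components' achromatic numbers; the role of the three forbidden graphs is exactly to rule out the configurations (several independent edges, or a cherry beside an edge) that would inflate $\alpha$ above $\omega$. Completing the induction gives $\langle 3 \rangle \Rightarrow \langle 1 \rangle$ and, together with $\langle 1 \rangle \Rightarrow \langle 2 \rangle \Rightarrow \langle 3 \rangle$, closes the cycle and establishes the equivalence.
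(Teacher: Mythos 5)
Your proof is correct, but there is nothing in the paper to compare it against: the paper states this theorem without proof, importing it from the literature (the equivalence $\langle 1\rangle \Leftrightarrow \langle 3\rangle$ from Christen and Selkow, and the addition of $\langle 2\rangle$ from Blidia, Ikhlef Eschouf and Maffray), so your argument is necessarily a self-contained route where the paper simply cites. Your two easy implications are sound and match in spirit how the paper handles the analogous steps in Corollary \ref{cor3}: a dominating coloring is complete, so $\omega(H)\leq b(H)\leq \alpha(H)$ holds hereditarily and gives $\langle 1\rangle \Rightarrow \langle 2\rangle$, while your degree count for $\langle 2\rangle \Rightarrow \langle 3\rangle$ is right --- a proper dominating $3$-coloring needs three vertices of degree at least two in distinct classes, and $P_4$, $P_3\cup K_2$, $3K_2$ have respectively two, one and zero such vertices, yet each has $\alpha = 3$, so $b = 2 \neq 3 = \alpha$ on all three. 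The real content is your $\langle 3\rangle \Rightarrow \langle 1\rangle$, and I verified it case by case: the join step is correct because in $G_1 + G_2$ no class of a proper coloring meets both sides, whence $\alpha(G_1+G_2)=\alpha(G_1)+\alpha(G_2)$ and $\omega$ is likewise additive; in the disconnected case, $3K_2$-freeness caps the nontrivial components at two, $P_3\cup K_2$-freeness forces both to be complete when there are two (a connected $P_3$-free graph is complete), a class consisting solely of isolated vertices cannot be complete with any other class, and your nestedness observation for the color sets $A$ on $K_p$ and $B$ on $K_q$ (a color in $A\setminus B$ and one in $B\setminus A$ would form an edgeless pair of classes) correctly yields $\alpha(G)=|A\cup B|=\max\{|A|,|B|\}\leq \max\{p,q\}=\omega(G)$. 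What the paper's citation buys is brevity; what your proof buys is a complete, elementary argument that incidentally reproves Christen and Selkow's characterization of $\omega\alpha$-perfect graphs via the cograph decomposition, with the three forbidden graphs playing exactly the structural role you identify --- preventing disjoint unions from inflating $\alpha$ above $\omega$.
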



\subsection{Grundy and pseudo-Grundy numbers}

First, a coloring of $G$ is called \emph{pseudo-Grundy} if each vertex is adjacent to some vertex of each smaller color. The \emph{pseudo-Grundy number} $\gamma(G)$ is the maximum $k$ for which a pseudo-Grundy $k$-coloring of $G$ exists (see \cite{MR0406799,MR2450569}).

Second, a proper pseudo-Grundy coloring of $G$ is called \emph{Grundy}. The \emph{Grundy number} $\Gamma(G)$ (also known as the \emph{first-fit chromatic number}) is the maximum $k$ for which a Grundy $k$-coloring of $G$ exists (see \cite{MR2450569,G}). From the definitions, we have that

\begin{equation} \label{des4}
\omega(G)\leq\Gamma(G)\leq \gamma(G).
\end{equation}

The following characterization of the graphs call \emph{trivially perfect graphs}, it will be used in the proof of Corollary \ref{cor2}.

\begin{theorem} [R-M \cite{montiel2015new}] \label{teo3}
A graph $G$ is $\omega\gamma$-perfect if and only if $G$ is $(C_4,P_4) $-free.
\end{theorem}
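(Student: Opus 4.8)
The plan is to prove both implications of Theorem \ref{teo3}, dispatching the ``forbidden subgraph'' direction by direct computation and reducing the substantial direction to an induction governed by the structure of $(C_4,P_4)$-free graphs. For necessity I would argue by contraposition: since $\omega\gamma$-perfection is a hereditary property, it suffices to exhibit for each of $C_4$ and $P_4$ a witness that $\omega\neq\gamma$. Both graphs are triangle-free, so $\omega(C_4)=\omega(P_4)=2$, whereas each contains a vertex of degree $2$ whose two neighbours can be forced to receive colours $1$ and $2$, yielding a pseudo-Grundy $3$-colouring; thus $\gamma(C_4)=\gamma(P_4)=3$, see Figure \ref{fig1}. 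Consequently any graph containing an induced $C_4$ or $P_4$ has an induced subgraph $H$ with $\omega(H)\neq\gamma(H)$, so it is not $\omega\gamma$-perfect, which is exactly the statement that every $\omega\gamma$-perfect graph is $(C_4,P_4)$-free.

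For sufficiency I again use that $(C_4,P_4)$-freeness is hereditary, so it is enough to show $\omega(G)=\gamma(G)$ for every $(C_4,P_4)$-free graph $G$. By Equation (\ref{des4}) we already have $\omega(G)\le\gamma(G)$, so the entire task is the reverse inequality $\gamma(G)\le\omega(G)$, which I would establish by induction on $|V(G)|$, the case $|V(G)|=1$ being immediate. If $G$ is disconnected with components $G_1,\dots,G_t$, I first note that in any pseudo-Grundy colouring a vertex bearing the largest colour $k$ forces all colours $1,\dots,k$ to appear inside its own component (its smaller-colour witnesses all lie there), so the restriction to that component is again pseudo-Grundy; hence $\gamma(G)\le\max_i\gamma(G_i)$, and combining the induction hypothesis with $\omega(G)=\max_i\omega(G_i)$ closes this case. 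If $G$ is connected and nontrivial, I would invoke the classical structural fact (Wolk) that a connected $(C_4,P_4)$-free graph has a universal vertex $u$; then $\omega(G)=\omega(G-u)+1$, and since $G-u$ is a smaller $(C_4,P_4)$-free graph the induction hypothesis gives $\gamma(G-u)=\omega(G-u)$. It therefore remains only to prove the inequality $\gamma(G)\le\gamma(G-u)+1$.

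This last inequality is the heart of the proof. Given a pseudo-Grundy $k$-colouring $c$ of $G$ with $j:=c(u)$, I would construct a pseudo-Grundy $(k-1)$-colouring $c'$ of $G':=G-u$ by deleting the colour $j$ and compressing: set $c'(v)=c(v)$ when $c(v)\le j$ and $c'(v)=c(v)-1$ when $c(v)>j$. Surjectivity onto $[k-1]$ is routine; the delicate point is the pseudo-Grundy condition for a vertex $v$ with $c(v)>j$, which under $c$ may have relied on the deleted vertex $u$ as its \emph{only} witness of colour $j$. The key observation is that any such $v$ that still needs a new witness of colour $j$ (necessarily $c(v)\ge j+2$) already possesses, inside $G'$, a witness $p$ of the old colour $j+1$; after compression $p$ acquires the new colour $j$ and so inherits the role formerly played by $u$, while every other required witness of $v$ lies in $G'$ with a matching compressed colour, and the vertices of old colour exactly $j$ keep all their witnesses (which had colour $<j$ and hence were never $u$). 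The main obstacle is precisely this bookkeeping: guaranteeing that erasing the universal vertex together with its colour never strands a vertex without the witnesses the compressed palette demands, and it is exactly the colour-$(j+1)$-replaces-colour-$j$ mechanism that makes the verification succeed. Once $\gamma(G')\ge k-1$ is secured we obtain $\gamma(G)\le\gamma(G-u)+1=\omega(G-u)+1=\omega(G)$, which completes the induction and hence the proof of Theorem \ref{teo3}.
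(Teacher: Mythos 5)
Your proposal is correct, but note that the paper itself offers no proof of Theorem \ref{teo3} to compare against: it is imported verbatim from the cited reference \cite{montiel2015new}, so within this paper the statement is an external black box. What you have produced is a self-contained substitute, and the argument checks out. The necessity half is fine, with one simplification available: since $\omega(H)\leq\gamma(H)$ always holds by Equation (\ref{des4}), you only need $\gamma(C_4),\gamma(P_4)\geq 3$, and your explicit colorings (degree-$2$ vertex colored $3$, its neighbours $1$ and $2$, the remaining vertex colored $1$) do satisfy the pseudo-Grundy condition at the colour-$2$ vertex in both graphs, which is the only point your ``can be forced'' phrasing glosses over. The sufficiency half is structured exactly right: the disconnected reduction $\gamma(G)\leq\max_i\gamma(G_i)$ via the vertex of maximum colour is sound (all its witnesses, hence all $k$ colours, live in its component, and the restriction is surjective and pseudo-Grundy there), and Wolk's universal-vertex theorem for connected $(C_4,P_4)$-free graphs is the correct structural input. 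The heart of your argument, the deletion lemma $\gamma(G)\leq\gamma(G-u)+1$, is genuinely valid for an \emph{arbitrary} vertex of an \emph{arbitrary} graph --- universality of $u$ is used only for $\omega(G)=\omega(G-u)+1$ --- and your merging mechanism survives the full case analysis: vertices of old colour $i<j$ or $i=j$ never had $u$ among their witnesses (those witnesses have colour $<j$); vertices of old colour $j+1$ drop to colour $j$ and shed precisely the requirement that $u$ might have served; and vertices of old colour $i\geq j+2$ use their old colour-$(j+1)$ witness, which is distinct from $u$ and now wears colour $j$, exactly as you say. The one cosmetic wrinkle is the edge case $j=k$: there your formula does not delete a colour at all, and if $u$ was not the unique vertex of colour $k$ you obtain a pseudo-Grundy $k$-colouring of $G-u$ rather than a $(k-1)$-colouring --- which only strengthens the conclusion $\gamma(G-u)\geq k-1$, so no repair is needed, but the phrase ``surjectivity onto $[k-1]$ is routine'' should be amended to cover it. Compared with simply citing \cite{montiel2015new} as the paper does, your route buys a transparent, fully elementary proof whose key inequality is of independent interest, at the cost of invoking two classical facts (Golumbic's identification of $(C_4,P_4)$-free graphs with trivially perfect graphs, and Wolk's theorem) that should be cited explicitly.
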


It is known that a trivially perfect graph is chordal (see \cite{MR562306}). The following corollary also gives a characterization of two non comparable parameters.

\begin{corollary}\label{cor2}
A graph $G$ is $\Gamma h$-perfect if and only if $G$ is $\omega\gamma$-perfect.
\begin{proof}

A $\Gamma h$-perfect graph is $(C_4,P_4)$-free because $\Gamma(C_4)=h(P_4)=2$ and $\Gamma(P_4)=h(C_4)=3$ (see Figure \ref{fig1}) then by Theorem \ref{teo3}, $G$ is $\omega\gamma$-perfect.

For the converse, let $G$ be a $\omega\gamma$-perfect graph. If $H$ is an induced graph of $G$, by Equation \ref{des4}, $\omega(H)=\Gamma(H)$. Since $G$ is a chordal graph, $\omega(H)=h(H)$, so the implication follows.
\end{proof}
\end{corollary}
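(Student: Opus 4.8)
The plan is to prove both implications by reducing to the forbidden-subgraph characterization of $\omega\gamma$-perfect graphs given in Theorem \ref{teo3}, namely that these are exactly the $(C_4,P_4)$-free graphs, and then to transfer the equality of clique number and Hadwiger number supplied by Theorem \ref{teo1}. The guiding idea is that on the relevant hereditary class all of $\omega,\Gamma,\gamma,h$ collapse to a single value, so the sandwiching chain (\ref{des4}) together with the equality $\omega=h$ on chordal graphs does the bookkeeping once the class has been pinned down.

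For the forward direction I would first argue that a $\Gamma h$-perfect graph is $(C_4,P_4)$-free. Since $\Gamma h$-perfection is hereditary, it suffices to exhibit, for each of the two forbidden graphs, a discrepancy between $\Gamma$ and $h$. I would record the small computations $\Gamma(C_4)=2$ while $h(C_4)=3$ (contracting one edge of $C_4$ yields $K_3$), and $\Gamma(P_4)=3$ while $h(P_4)=2$ (a path is a tree and hence has no $K_3$ minor, whereas a suitable first-fit order uses three colors). Thus neither $C_4$ nor $P_4$ can occur as an induced subgraph of a $\Gamma h$-perfect graph, and Theorem \ref{teo3} immediately gives that $G$ is $\omega\gamma$-perfect.

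For the converse I would start from an $\omega\gamma$-perfect graph $G$ and fix an arbitrary induced subgraph $H$; by Theorem \ref{teo3} every such $H$ is again $(C_4,P_4)$-free, hence $\omega\gamma$-perfect, so $\omega(H)=\gamma(H)$. Feeding this into the chain (\ref{des4}) forces $\omega(H)=\Gamma(H)=\gamma(H)$, which already equates $\Gamma$ with $\omega$. It then remains to replace $\omega(H)$ by $h(H)$: here I would invoke that a $(C_4,P_4)$-free graph is chordal---any induced cycle of length at least five contains an induced $P_4$ on four consecutive vertices---so that Theorem \ref{teo1} yields $\omega(H)=h(H)$. Combining the two equalities gives $\Gamma(H)=h(H)$ for every induced $H$, that is, $G$ is $\Gamma h$-perfect.

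The only genuinely delicate points are the four parameter evaluations on $C_4$ and $P_4$ and the observation that $(C_4,P_4)$-freeness upgrades to chordality; the latter is the real engine of the argument, since it is what lets the equality $\omega=h$ of Theorem \ref{teo1} be imported. Everything else is a mechanical collapse of the inequality chain (\ref{des4}), so I expect no further obstacle once these verifications are in place.
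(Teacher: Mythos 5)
Your proposal is correct and follows essentially the same route as the paper: the forward direction uses the same parameter discrepancies on $C_4$ and $P_4$ (with the values $\Gamma(C_4)=h(P_4)=2$ and $\Gamma(P_4)=h(C_4)=3$) together with Theorem \ref{teo3}, and the converse collapses the chain (\ref{des4}) to get $\omega(H)=\Gamma(H)$ and then invokes chordality plus Theorem \ref{teo1} to get $\omega(H)=h(H)$. The only difference is cosmetic: where the paper cites the known fact that trivially perfect graphs are chordal, you supply the short direct argument that any induced cycle of length at least five contains an induced $P_4$, which is a perfectly valid (and self-contained) substitute.
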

The following corollary is a consequence of the perfection of $\omega \gamma$-perfect graphs.
\begin{corollary}
Every $\Gamma h$-perfect graph is $\omega\chi$-perfect.
\end{corollary}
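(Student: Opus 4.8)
The plan is to chain together the characterizations already established in this section rather than to argue from scratch. The cleanest route exploits the fact that the class of $\Gamma h$-perfect graphs has just been pinned down exactly in Corollary \ref{cor2}. So the first step is to invoke Corollary \ref{cor2}: if $G$ is $\Gamma h$-perfect, then $G$ is $\omega\gamma$-perfect. This immediately transfers the problem from a pair of ``noncomparable'' parameters to the far more tractable $\omega\gamma$ pair.

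Next I would apply Theorem \ref{teo3} to characterize $\omega\gamma$-perfection structurally: $G$ is $\omega\gamma$-perfect if and only if $G$ is $(C_4,P_4)$-free, i.e.\ $G$ is trivially perfect. The point of this step is to convert the equality of two coloring parameters into a forbidden-subgraph condition, which is the form in which classical inclusions between graph classes are usually stated.

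Finally I would close the argument using two facts already cited in the paper. The remark preceding Corollary \ref{cor2} states that every trivially perfect graph is chordal, and the remark preceding the corollary on $\omega h$-perfection records that every chordal graph is $\omega\chi$-perfect. Composing these gives trivially perfect $\Rightarrow$ chordal $\Rightarrow$ $\omega\chi$-perfect, completing the chain
\[
\Gamma h\text{-perfect}\ \Rightarrow\ \omega\gamma\text{-perfect}\ \Rightarrow\ (C_4,P_4)\text{-free}\ \Rightarrow\ \text{chordal}\ \Rightarrow\ \omega\chi\text{-perfect}.
\]
I do not expect any genuine obstacle here, since every individual implication is either proved earlier in the paper or quoted from the literature; the only thing to be careful about is citing each link in the correct direction and making sure the $(C_4,P_4)$-free class is indeed being used as a subclass of the chordal ($(C_4,C_5,\dots)$-free) graphs, so that the chordal-perfection fact genuinely applies. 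If one prefers to avoid the explicit trivially-perfect detour, an alternative is to note that $\omega\gamma$-perfect graphs are $\omega h$-perfect via Equation (\ref{des4}) together with $h\le\gamma$, and then appeal directly to Theorem \ref{teo1}; either variant reaches the same conclusion in three short lines.
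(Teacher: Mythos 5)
Your main argument is correct and is essentially the paper's own proof: the paper justifies this corollary exactly as a ``consequence of the perfection of $\omega\gamma$-perfect graphs,'' i.e.\ the chain $\Gamma h$-perfect $\Rightarrow$ $\omega\gamma$-perfect (Corollary \ref{cor2}) $\Rightarrow$ $(C_4,P_4)$-free (Theorem \ref{teo3}) $\Rightarrow$ chordal $\Rightarrow$ $\omega\chi$-perfect, which is precisely your route. One caution about your parenthetical alternative: the inequality $h\leq\gamma$ is \emph{false} in general (these parameters are not comparable --- e.g.\ the Petersen graph has $\gamma\leq\Delta+1=4$ but $h\geq 5$, since contracting the five spokes yields $K_5$), and it is not part of Equation (\ref{des4}); while $h(H)\leq\gamma(H)$ does hold on $\omega\gamma$-perfect graphs, that is a consequence of their chordality via Theorem \ref{teo1}, so the ``shortcut'' secretly reuses the main route rather than bypassing it.
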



\subsection{The $b\gamma$-perfect graphs}
Finally, we will use the following characterization of the proof of Theorem \ref{teo5}.

\begin{theorem} [Blidia, Ikhlef, Maffray \cite{MR2954335}] \label{teo4}
A graph $G$ is $b\Gamma$-perfect if and only if $G$ is $(P_4,3P_3,2D)$-free.
\end{theorem}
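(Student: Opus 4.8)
The plan is to prove both implications from the observation that each side of the equivalence is hereditary, so it is enough to compare $b$ and $\Gamma$ on a graph together with all of its induced subgraphs.

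For necessity I would argue contrapositively. Since being $b\Gamma$-perfect is preserved under induced subgraphs, if $G$ contained any of $P_4$, $3P_3$, $2D$ as an induced subgraph $H$, then $b\Gamma$-perfection would force $b(H)=\Gamma(H)$; hence it suffices to compute the two invariants on each forbidden graph and exhibit a discrepancy. Here $\Gamma(P_4)=3$ while $b(P_4)=2$ (only the two interior vertices have degree at least $2$, so no proper dominating $3$-coloring exists); for a disjoint union the Grundy number is the maximum of the component Grundy numbers, so $\Gamma(3P_3)=\Gamma(P_3)=2$, whereas coloring the three centers with three distinct colors gives $b(3P_3)=3$; and the analogous computation on $D$ yields $b(2D)\neq\Gamma(2D)$. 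As each forbidden graph violates $b=\Gamma$, a $b\Gamma$-perfect graph is $(P_4,3P_3,2D)$-free.

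For sufficiency I would first reduce the statement. As $(P_4,3P_3,2D)$-freeness is hereditary, it suffices to prove $b(G)=\Gamma(G)$ for every $(P_4,3P_3,2D)$-free graph $G$. Such a $G$ is $P_4$-free, hence a cograph and therefore perfect, so $\chi(G)=\omega(G)$ and Equation~(\ref{des5}) gives $b(G)\ge\omega(G)=\chi(G)$. I would then record that $\Gamma=\chi$ on every cograph, proved by induction along the cotree using $\Gamma(G_1+G_2)=\Gamma(G_1)+\Gamma(G_2)$ at a join node and $\Gamma\big(\bigcup_i C_i\big)=\max_i\Gamma(C_i)$ at a union node, which mirror the corresponding recursions for $\chi$. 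With these two facts the entire converse collapses to the single inequality $b(G)\le\chi(G)$ for $(P_4,3P_3,2D)$-free $G$.

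I would establish $b(G)\le\chi(G)$ by induction on $|V(G)|$, splitting on the top node of the cotree. If $G=G_1+G_2$ is a join, then no color can occur in both $G_1$ and $G_2$ (any such pair is adjacent), so the color set of a $b$-coloring splits as $S_1\sqcup S_2$; each $b$-vertex of a color in $S_i$ lies in $G_i$ and is adjacent, inside $G_i$, to every other color of $S_i$, so the restriction to $G_i$ is a $b$-coloring, giving $b(G)=|S_1|+|S_2|\le b(G_1)+b(G_2)\le\chi(G_1)+\chi(G_2)=\chi(G)$. The delicate case is $G=\bigcup_i C_i$ disconnected, where $\chi(G)=\max_i\chi(C_i)$ but the $b$-number of a union can exceed this maximum, exactly the behavior witnessed by $3P_3$. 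Taking a $b$-coloring with $k=b(G)$ colors, every color's $b$-vertex sees all $k-1$ other colors inside its own component, so each component hosting a $b$-vertex contains all $k$ colors; if all $b$-vertices lie in one component $C$, the restriction is a $b$-coloring of $C$ and induction yields $k\le b(C)\le\chi(C)\le\chi(G)$, as desired.

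The main obstacle is the surviving possibility that the $b$-vertices are distributed over two or more components, each of which then carries all $k$ colors while being only $(k-1)$-chromatic, so that surplus colors are forced to appear wastefully inside several components. The hard part will be to show that this configuration, together with $b$-vertices of distinct colors living in distinct components, always produces an induced $3P_3$ (when the colors spread over three components with a short path witness in each) or an induced $2D$ (when only two components are available, with $D$ the minimal connected witness), contradicting $(3P_3,2D)$-freeness. Carrying out this extraction carefully — selecting the vertices that form the copy and verifying that it is induced — is the crux of the proof and is precisely what singles out $3P_3$ and $2D$ as the two minimal obstructions arising at union nodes.
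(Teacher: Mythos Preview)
The paper does not contain a proof of this theorem: it is stated with attribution to Blidia, Ikhlef~Eschouf and Maffray \cite{MR2954335} and then used as a black box in the proof of Theorem~\ref{teo5}. There is therefore no ``paper's own proof'' to compare your attempt against.

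Assessing your proposal on its own merits: the global architecture is reasonable (hereditary reduction, cotree induction, and the identity $\Gamma=\chi$ on cographs, which is indeed the Christen--Selkow result), and the join case is handled correctly. However, two genuine gaps remain. First, in the necessity direction you do not say what the graph $D$ is, and ``the analogous computation on $D$'' is not a computation; you must identify $D$ and exhibit the actual values $b(2D)$ and $\Gamma(2D)$. Second, and more seriously, you explicitly stop short of the only substantive step in the sufficiency direction: showing that a $b$-coloring with $b$-vertices spread over at least two components of a $(P_4,3P_3,2D)$-free cograph forces an induced $3P_3$ or $2D$. You describe this as ``the crux of the proof'' and then do not do it. Until that extraction is written out---choosing the vertices, checking there are no extra edges, and handling separately the two-component versus three-or-more-component split---what you have is an outline, not a proof.
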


We get the following characterization.

\begin{theorem} \label{teo5}
A graph $G$ is $b\gamma$-perfect if and only if $G$ is $(C_4,P_4,3P_3,2D)$-free.
\end{theorem}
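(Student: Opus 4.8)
The plan is to prove both implications by reducing to the two characterizations already available, namely Theorem \ref{teo3} for $\omega\gamma$-perfection and Theorem \ref{teo4} for $b\Gamma$-perfection, glued together by the chain $\omega(G)\leq\Gamma(G)\leq\gamma(G)$ of Equation \ref{des4}. The key observation that makes the argument work is that $(C_4,P_4,3P_3,2D)$-free splits into the two hypotheses $(C_4,P_4)$-free and $(P_4,3P_3,2D)$-free, each feeding one of the cited theorems.

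For the forward implication I would argue by contraposition on the forbidden subgraphs. It suffices to check that $b$ and $\gamma$ disagree on each member of $\{C_4,P_4,3P_3,2D\}$: one computes $b(C_4)=b(P_4)=2$ and $\gamma(C_4)=\gamma(P_4)=3$, while $b(3P_3)=3>2=\gamma(3P_3)$ and $b(2D)>\gamma(2D)$ (see Figure \ref{fig1}). Hence if $G$ contained any of these four graphs as an induced subgraph $H$, that $H$ would violate $b(H)=\gamma(H)$, so every $b\gamma$-perfect graph must be $(C_4,P_4,3P_3,2D)$-free. It is worth noting that $b$ and $\gamma$ are not comparable, since on $C_4,P_4$ one has $b<\gamma$ whereas on $3P_3,2D$ one has $b>\gamma$; thus the equality $b=\gamma$ is a genuine restriction rather than a collapse forced by a single inequality.

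For the converse, suppose $G$ is $(C_4,P_4,3P_3,2D)$-free. Then $G$ is in particular $(C_4,P_4)$-free, so Theorem \ref{teo3} gives that $G$ is $\omega\gamma$-perfect; and $G$ is in particular $(P_4,3P_3,2D)$-free, so Theorem \ref{teo4} gives that $G$ is $b\Gamma$-perfect. Now fix an induced subgraph $H$ of $G$. From $\omega\gamma$-perfection we have $\omega(H)=\gamma(H)$, and from $b\Gamma$-perfection we have $b(H)=\Gamma(H)$. Substituting $\gamma(H)=\omega(H)$ into Equation \ref{des4}, the chain $\omega(H)\leq\Gamma(H)\leq\gamma(H)$ collapses and forces $\Gamma(H)=\omega(H)=\gamma(H)$. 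Therefore $b(H)=\Gamma(H)=\gamma(H)$, and since $H$ was arbitrary, $G$ is $b\gamma$-perfect.

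The backward direction is essentially immediate once the two cited theorems are combined, so the step I expect to require the most care is the forward verification that $b\neq\gamma$ on the four small graphs. The delicate cases are the disconnected ones, $3P_3$ and $2D$: because a dominating (respectively pseudo-Grundy) coloring of a disconnected graph must realize all of its witnessing adjacencies inside a single component, the parameters $b$ and $\gamma$ behave differently under disjoint union, and it is precisely this asymmetry that produces $b>\gamma$ there. This must be computed honestly from the definitions rather than read off any monotonicity principle.
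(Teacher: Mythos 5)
Your proposal is correct and follows essentially the same route as the paper: the forward direction checks that $b(H)\neq\gamma(H)$ on each of $C_4$, $P_4$, $3P_3$, $2D$ (the paper cites Figure \ref{fig1}; your explicit values, including $b(3P_3)=3>2=\gamma(3P_3)$ via the behavior of $b$ versus $\gamma$ under disjoint union, are accurate), and the converse combines Theorem \ref{teo3} and Theorem \ref{teo4} with the chain in Equation \ref{des4} to force $b(H)=\Gamma(H)=\omega(H)=\gamma(H)$ for every induced subgraph $H$, exactly as in the paper's proof.
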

\begin{proof}
Note that, if $ H \in \{ C_4, P_4, 3P_3,2D \} $ then $ b(H)\not=\gamma(H)$, hence, the implication is true (see Figure \ref{fig1}).

For the converse, a $(C_4,P_4,3P_3,2D)$-free graph $G$ is $\omega\gamma$-perfect (by Theorem \ref{teo3}) and $b\Gamma$-perfect (by Theorem \ref{teo4}). Then, for every induced subgraph $H$ of $G$, $\omega(H)=\gamma(H)=\Gamma(H)$ by Equation \ref{des4} and $b(H)=\Gamma(H)$. Therefore, $b(H)=\gamma(H)$ and the result follows.
\end{proof}
\begin{figure}[!htbp]
\begin{center}
\includegraphics{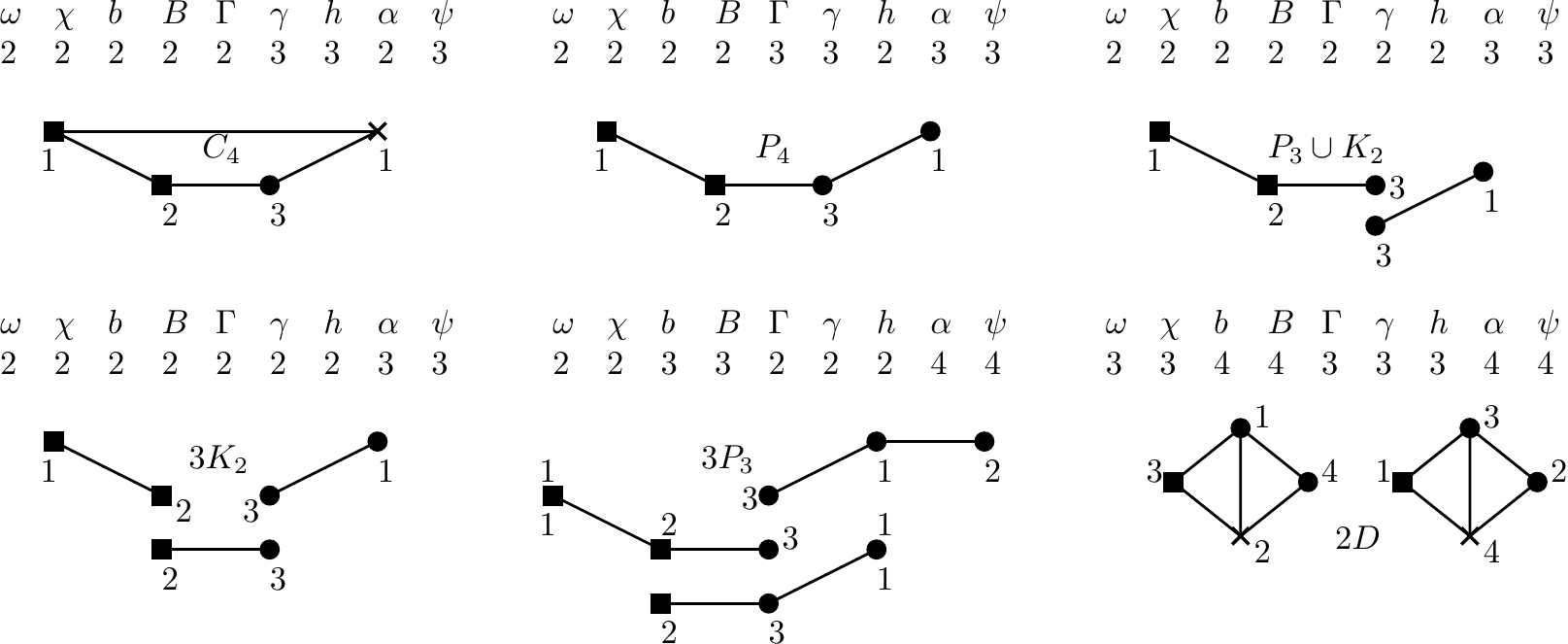}
\caption{\label{fig1} Graphs with a complete coloring with numbers and a connected coloring with symbols.}
\end{center}
\end{figure}



\begin{thebibliography}{20}

\bibitem{MR3265137}
L.~Abrams and Y.~Berman, \emph{Connected pseudoachromatic index of complete
  graphs}, Australas. J. Combin. \textbf{60} (2014), 314--324.

\bibitem{AR}
G.~Araujo-Pardo and C.~Rubio-Montiel, \emph{On $\omega\psi$-perfect graphs},
  Ars Combin. (In press).

\bibitem{AraujoPardo2013163}
G.~Araujo-Pardo and C.~Rubio-Montiel, \emph{The $\omega \psi$-perfection of graphs}, Electron. Notes
  Discrete Math. \textbf{44} (2013), 163--168.

\bibitem{B}
C.~Berge, \emph{F{\" a}rbung von {G}raphen, deren s{\" a}mtliche bzw. ungerade
  {K}reise starr sind}, Wiss. Z. Martin-Luther-Univ. Halle-Wittenberg,
  Math.-Natur. Reihe \textbf{10} (1961), 114.

\bibitem{MR0406799}
C. Berge, \emph{Perfect graphs}, Studies in graph theory, {P}art {I}, Math.
  Assoc. Amer., Washington, D. C., 1975, pp.~1--22. Studies in Math., Vol. 11.

\bibitem{MR2954335}
M.~Blidia, N.~Ikhlef~Eschouf, and F.~Maffray, \emph{Characterization of
  {$b\gamma$}-perfect graphs}, AKCE Int. J. Graphs Comb. \textbf{9} (2012),
  no.~1, 21--29.

\bibitem{MR2450569}
G.~Chartrand and P.~Zhang, \emph{Chromatic graph theory}, Discrete Mathematics
  and its Applications (Boca Raton), CRC Press, Boca Raton, FL, 2009.

\bibitem{MR539075}
C. A. Christen and S. M. Selkow, \emph{Some perfect coloring properties of
  graphs}, J. Combin. Theory Ser. B \textbf{27} (1979), no.~1, 49--59.

\bibitem{MR2233847}
M.~Chudnovsky, N.~Robertson, P.~Seymour, and R.~Thomas, \emph{The strong
  perfect graph theorem}, Ann. of Math. (2) \textbf{164} (2006), no.~1,
  51--229.

\bibitem{MR0130190}
G. A. Dirac, \emph{On rigid circuit graphs}, Abh. Math. Sem. Univ. Hamburg
  \textbf{25} (1961), 71--76.

\bibitem{MR562306}
M. C. Golumbic, \emph{Algorithmic graph theory and perfect graphs}, Academic
  Press, New York, 1980.

\bibitem{G}
P. M. Grundy, \emph{Mathematics and games}, Eureka \textbf{2} (1939), 6--8.

\bibitem{MR0256930}
R. P. Gupta, \emph{Bounds on the chromatic and achromatic numbers of
  complementary graphs.}, Recent {P}rogress in {C}ombinatorics ({P}roc. {T}hird
  {W}aterloo {C}onf. on {C}omb., 1968), Academic Press, New York, 1969,
  pp.~229--235.

\bibitem{hadwiger1958ungeloste}
H.~Hadwiger, \emph{Ungel{\"o}ste probleme 26}, Elem. Math. \textbf{13} (1958),
  128--129.

\bibitem{MR0103161}
A.~Hajnal and J.~Sur{\'a}nyi, \emph{\"{U}ber die {A}ufl\"osung von {G}raphen in
  vollst\"andige {T}eilgraphen}, Ann. Univ. Sci. Budapest. E\"otv\"os. Sect.
  Math. \textbf{1} (1958), 113--121.

\bibitem{MR0272662}
F.~Harary, S.~Hedetniemi, and G.~Prins, \emph{An interpolation theorem for
  graphical homomorphisms}, Portugal. Math. \textbf{26} (1967), 453--462.

\bibitem{MR0302480}
L.~Lov{\'a}sz, \emph{Normal hypergraphs and the perfect graph conjecture},
  Discrete Math. \textbf{2} (1972), no.~3, 253--267.

\bibitem{MR1801077}
D.~Rautenbach and V. E. Zverovich, \emph{Perfect graphs of strong domination
  and independent strong domination}, Discrete Math. \textbf{226} (2001),
  no.~1-3, 297--311.

\bibitem{montiel2015new}
C.~Rubio-Montiel, \emph{A new characterization of trivially perfect graphs},
  Electron. J. Graph Theory Appl. \textbf{3} (2015), no.~1, 22--26.

\bibitem{MR1846917}
V.~Yegnanarayanan, \emph{Graph colourings and partitions}, Theoret. Comput.
  Sci. \textbf{263} (2001), no.~1-2, 59--74.

\end{thebibliography}
\end{document}